\documentclass[12pt,leqno]{article}
\usepackage[T1]{fontenc}
\usepackage{amsmath,amsthm}
\usepackage{amssymb,latexsym}
\usepackage{enumitem}
\newtheorem{thm}{Theorem}
\newtheorem{cor}{Corollary}
\newtheorem{lem}{Lemma}

\numberwithin{equation}{section}
\theoremstyle{definition}

\newtheorem{exa}{Example}
\frenchspacing
\setenumerate{label={\textup{(\alph*)}},nolistsep,fullwidth,itemindent=\parindent}
\sloppy
\usepackage[a4paper,body={5.75in,8.5in},nohead]{geometry}
\makeatletter
\def\@seccntformat#1{\csname the#1\endcsname.\ }
\makeatother

\begin{document}
\baselineskip=16pt
\author{Piotr  Nowak\\
Mathematical Institute,  University of Wroc{\l}aw\\
Pl. Grunwaldzki 2/4, 50-384 Wroc{\l}aw, Poland\\
E-mail: nowak@math.uni.wroc.pl
}
\title{Order preserving property of moment estimators}
\date{}
\maketitle
\begin{abstract}
Balakrishnan and Mi \cite{bal_01} considered order preserving
property of maximum likelihood estimators.
In this paper
there are given conditions under which
the moment estimators  have the property of preserving 
stochastic orders.
The  property of preserving for usual 
stochastic order as well as for likelihood ratio order is considered.
Sufficient conditions  are 
established for some parametric families of distributions.
\\ 
\textit{Keywords:} moment estimator, maximum likelihood estimator, exponential family, stochastic ordering, total positivity.
\\
\textit{2010 MSC:} 60E15, 62F10.
\end{abstract}
\section{Introduction and preliminaries}

Suppose that $\mathbf{X}=(X_1,X_2,\ldots,X_n)$ is a sample from a 
population with density
$f(x;{\theta})$, where  $\theta  \in \Theta \subset \mathcal{R}$.
We now consider the estimation of $\theta$
by the method of moments (see, for example, Borovkov \cite{borovkov}).
Let $g:\mathcal{R}\to\mathcal{R}$  be a function such  that the
function $m:\Theta\to\mathcal{R}$
$$
m(\theta)=\int_{-\infty}^{\infty} g(x)f(x;\theta)dx
$$
is monotone  and continuous on $\Theta$.  Particularly, if $m$
is strictly monotone, then $m$ is one-to-one and  $m^{-1}$ exits.
Then for every $t\in m(\Theta)$  exists a unique solution 
of the equation $m(\theta)=t$, i.e.  $\theta=m^{-1}(t)$.
In the case when $m$ is nondecreasing or nonincreasing 
we can take $m^{-1}(t)=\inf\{\theta: m(\theta)\leq t\}$, for any $t\in m(\Theta)$.
Let 
$$
\bar g=\frac{1}{n}\sum_{i=1}^n g(X_i)
$$
be the generalized empirical moment based on $\mathbf{X}$.

If $\bar g\in m(\Theta)$, then the estimator obtained by the method 
of moments  (to be short \emph{moment estimator}) is of the form
\begin{equation}\label{mm:form}
\hat \theta = m^{-1} (\bar g).
\end{equation}

We know that
these estimators are strongly consistent.
We often put  $g(x)~=~x^k$, $k\geq 1$.
Then $\bar g$ reduces to $m_k$ --
$k$th empirical moment, i.e.~$m_k=\frac{1}{n}\sum_{i=1}^n X_i^k$.
Particularly, $m_1=\bar{X}$.

In this paper our aim is to give conditions under
which the moment estimators  have the property of preserving 
stochastic orders.  We shall deal with stochastic orders, so
recall their definitions.

Let $X$ and $Y$ be two random variables, $F$ and $G$ their
respective distribution functions and $f$ and $g$ their
respective density functions, if they exist. 
We say that $X$ is stochastically smaller
than $Y$ (denoted by $X\leq_{\textup{st}}Y$) if $F(x)\geq G(x)$ 
for all $x\in\mathcal{R}$. The stronger order than
usual stochastic order is likelihood ratio order.
 We say that $X$~is smaller than $Y$ in the likelihood
ratio order (denoted by $X\leq_{\textup{lr}}Y$)
if $g(x)/f(x)$ is increasing in $x$.
Let $F^{-1}$ and $G^{-1}$ be quantile functions of $F$
and $G$ respectively. We say that $X$ is less dispersed than
$Y$ (denoted $X\leq_{\textup{disp}}Y$) if $G^{-1}(\alpha)-F^{-1}(\alpha)$
is an increasing function in $\alpha\in(0,1)$.
The family of distributions $\{f(x;\theta),\theta\in\Theta\subset\mathcal{R}\}$
is stochastically  increasing in $\theta$ if
$X(\theta_1)\leq_{\textup{st}}Y(\theta_2)$
for all $\theta_1<\theta_2\in\Theta$, where 
$X(\theta)$ has a density $f(x;\theta)$. 
For example, it is well known that location
parameter family $\{f(x-\theta), \theta\in R\}$
and scale parameter family $\{1/\theta f(x/\theta),\theta>0\}$, 
$x>0$,  are stochastically increasing in $\theta$.
For further details on stochastic orders we refer 
to  Shaked and Shanthikumar \cite{shaked_st:or} and
 Marshall and Olkin \cite{olkin}.

Furthermore, we say that the family of distributions
$\{f(x;\theta),\theta\in\Theta\subset\mathcal{R}\}$ has 
monotone likelihood ratio if
$f(x;\theta_2)/f(x;\theta_1)$ is increasing function in $\theta$
for any $\theta_1<\theta_2\in\Theta$, i.e. 
$X(\theta_1)\leq_{\textup{lr}}X(\theta_2)$.
For example, the following families have the monotone likelihood ratio in $\theta$:
\begin{enumerate}
\item the family of distributions with density
 $f(x;\theta)=c(\theta)h(x)I_{(-\infty,\theta)}(x)$;
\item the family of distributions with density
 $f(x;\theta)=c(\theta)h(x)\exp(\eta(\theta)t(x))$;
provided that both $\eta$ and $t$ are increasing.
\end{enumerate}

The likelihood ratio order is  closely related with
total positivity, (see Karlin \cite{karlin_68}).
Let $k(x,y)$ be a measurable function defined on $X\times Y$,
where $X$ and $Y$ are subsets of $\mathcal{R}$.
We say that $k(x,y)$
is totally positive of order $r$ 
(to be short $k(x,y)$ is $TP_r$)
if for all
$x_1<\dots<x_m$, $x_{i}\in X$; 
and for all  $y_1<\dots<y_m$, $y_{i}\in Y$;
and all $1\leq m\leq r$,
we have
$$
\left|
\begin{array}{ccc}
k(x_1,y_1)&\dots&k(x_1,y_m)\\
\vdots&&\vdots\\
k(x_m,y_1)&\dots&k(x_m,y_m)
\end{array}
\right|
\geq 0.
$$

It is clear, that the ordering
$X(\theta_1)\leq_{\textup{lr}}X(\theta_2)$
whenever $\theta_1<\theta_2\in\Theta$ is equivalent that
the density $f(x;\theta)$ is $TP_2$.
We refer to Karlin \cite{karlin_68} for proofs of basic facts 
in the theory of total positivity.

From the definition immediately follows that if $g$ and $h$
are nonnegative  functions and $k(x,y)$ is $TP_r$ then
$g(x)h(y)k(x,y)$ is also $TP_r$.
Similarly, if $g$ and $h$ are increasing functions and $k(x,y)$ is $TP_r$,
then $k(g(x),h(y))$ is again $TP_r$.

Twice differentiable positive function $f(x;\theta)$
is  $TP_2$ if and only if
$$
\dfrac{\partial ^2}{\partial x \partial \theta} \log f(x;\theta) \geq 0.
$$

Total positivity of many functions that arise in statistics
follows from the basic composition formula. 
If $g(x,z)$ is $TP_m$, $h(z,y)$ is $TP_n$ and if the convolution
$$
k(x,y)=\int_{-\infty}^{\infty} g(x,z) h(z,y)dz
$$
is finite, then $k(x,y)$ is
 $TP_{\min (m,n)}$.

A particular and important case is when $k(x,y)=f(y-x)$.
A nonnegative  function $f$ is said to be $PF_k$ 
(P\'olya frequency function of order $k$) if $f(y-x)$ is $TP_k$.

Recall that a real valued function $f$ is said to be
 logconcave on  interval $A$ if $f(x)\geq 0$ and $\log f$ is an
extended real valued concave function (we put $\log 0 =-\infty$).
It is well known (see, for example, Schoenberg \cite{schoenberg})
that the function $f$ is $PF_2$ if and only if
$f$ is nonnegative and logconcave on~$\mathcal{R}$.
Recall also a  very important property of $PF_2$ functions 
which we will use in the sequel that
if $g$ and $h$ are logconcave functions on $\mathcal{R}$,
such that the convolution 
$$h(x)=\int_{-\infty}^{\infty}g(x-z)h(z)dz$$
is defined for all $x\in\mathcal{R}$,
then the function $h$ is also logconcave on~$\mathcal{R}$.

Another very important property of $TP$ functions is the
variation diminishing property.

\begin{lem}\label{t2}
Let  $g$ be given by absolutely convergent integral
$$
g(x)=\int_{-\infty}^{\infty} k(x,y)f(y)dy
$$
where $k(x,y)$ is $TP_r$ and $f$ change sign at most $j\leq r-1$ times.
Then $g$ changes sign at most $j$ times. Moreover, if
$g$ changes  sign $j$ times, then $f$ and $g$ have the same arrangement of signs.
\end{lem}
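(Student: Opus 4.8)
The plan is to argue by contradiction, reducing the claim to a statement about finitely many points and then exploiting the nonnegativity of the minors of $k$ of order at most $r$ together with the basic composition formula. Write $S^-(\cdot)$ for the number of sign changes of a function, so that the assertion is $S^-(g)\le S^-(f)=j$ with $j\le r-1$, together with the sharper conclusion on the arrangement of signs in the equality case. Since $S^-(g)$ equals the supremum, over finite increasing samples $x_1<\dots<x_p$, of the number of sign alternations in $(g(x_1),\dots,g(x_p))$, it suffices to rule out the existence of points $x_0<x_1<\dots<x_{j+1}$ at which $g$ alternates strictly in sign, say $\sigma(-1)^i g(x_i)>0$ for a fixed $\sigma\in\{-1,+1\}$ and all $i$.

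First I would fix such alleged points $x_0<\dots<x_{j+1}$ and build a nontrivial combination $\psi(y)=\sum_{i=0}^{j+1}c_i\,k(x_i,y)$ whose coefficients alternate in sign. To do this I impose $j+1$ homogeneous interpolation conditions $\psi(\eta_1)=\dots=\psi(\eta_{j+1})=0$ at auxiliary nodes $\eta_1<\dots<\eta_{j+1}$ placed at (or adjacent to) the sign changes of $f$; this is a linear system for $(c_0,\dots,c_{j+1})$ with a one-dimensional solution space, and Cramer's rule expresses each $c_i$, up to a common factor, as $(-1)^i$ times a minor of order $j+1$ of the matrix $\big(k(x_{i'},\eta_l)\big)$. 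Because $j+1\le r$, every such minor is nonnegative by the $TP_r$ hypothesis, so the $c_i$ indeed alternate in sign. Consequently each product $c_i\,g(x_i)$ carries one and the same sign, whence $\sum_i c_i g(x_i)\ne 0$ as soon as the $c_i$ are not all zero, and by choosing $\sigma$ appropriately this sum is strictly positive.

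On the other hand, interchanging sum and integral gives $\sum_i c_i g(x_i)=\int \psi(y)f(y)\,dy$, and the contradiction is to be extracted from the sign structure of the integrand. Since $\psi$ is pinned to vanish at the nodes $\eta_l$ and, by the same nonnegative-minor mechanism applied now in the $y$-variable (the sign-regularity of $k$), changes sign at most $j$ times, $\psi$ can be kept out of phase with $f$, so that $\psi(y)f(y)\le 0$ for every $y$ and hence $\int\psi f\le 0$ — contrary to the strict positivity just obtained. The sharper \emph{same arrangement of signs} statement follows by tracking, in the equality case $S^-(g)=j$, the signs of the cofactors appearing above, which tie the sign pattern of $(g(x_i))$ directly to that of $f$.

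The main obstacle is exactly the sign bookkeeping at the boundary order $j=r-1$: there the minors guaranteed nonnegative by $TP_r$ may vanish, the alternation of the $c_i$ can become nonstrict, and matching the $j+1$ nodes of $\psi$ against the at most $j$ sign changes of $f$ carries an unavoidable off-by-one that makes the ``out of phase'' arrangement delicate. The standard remedy, which I would adopt, is to prove the statement first for strictly totally positive smooth kernels — obtained by convolving $k$ with narrow P\'olya frequency (Gaussian) kernels, which preserves total positivity by the composition formula — where all relevant minors are strictly positive and the alternation and phase arguments are clean, and then to recover the general $TP_r$ case by a limiting argument, letting the smoothing parameter tend to zero and invoking the absolute convergence of the integral defining $g$.
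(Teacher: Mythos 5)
This lemma is the classical variation--diminishing theorem for totally positive kernels; the paper does not prove it but cites Karlin's \emph{Total Positivity}, so your proposal has to stand on its own as a proof of that theorem. It does not. The fatal step is the second half: you need $\psi(y)f(y)\le 0$ for all $y$, where $\psi(y)=\sum_{i=0}^{j+1}c_i\,k(x_i,y)$. Away from the nodes, the sign of $\psi(y)$ is (up to the factor $(-1)^{\#\{l:\,\eta_l<y\}}$ coming from re-sorting columns) the sign of a minor of $k$ of order $j+2$, since $\psi(y)$ is exactly the cofactor expansion of the $(j+2)\times(j+2)$ determinant with columns indexed by $y,\eta_1,\dots,\eta_{j+1}$. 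But the hypothesis is only $TP_r$ with $j\le r-1$, so in the boundary case $j=r-1$ these are minors of order $r+1$, about which total positivity says nothing. (And the boundary case is the one the paper actually uses: $r=2$, $f(y)=y-c$, $j=1$.) The assertion that $\psi$ ``changes sign at most $j$ times by the same nonnegative-minor mechanism'' is therefore unsupported, and is in any case circular --- it is itself a variation--diminishing statement of the same strength as the one being proved. The node-counting problem you flag is also not a removable nuisance: $f$ has at most $j$ sign changes but you must impose $j+1$ interpolation conditions to make the null space one-dimensional, so $\psi$ is forced to oscillate across $j+2$ intervals while $f$ is constant in sign on only $j+1$; they cannot be kept out of phase on all of $\mathcal{R}$. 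Finally, the proposed rescue by Gaussian smoothing does not deliver what you need: convolution preserves $TP_r$ but does not convert a degenerate kernel into a strictly totally positive one (e.g.\ a rank-one kernel $a(x)b(y)$ stays rank one), so the nonvanishing of the cofactors is not restored.

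The proof that works runs in the opposite direction: do not discretize $g$, discretize $f$. Let $I_1,\dots,I_{j+1}$ be the maximal intervals on which $f$ has constant sign $\epsilon_l=\epsilon(-1)^l$, and write
$$
g(x)=\sum_{l=1}^{j+1}\epsilon_l H(x,l),\qquad H(x,l)=\int_{I_l}k(x,y)\,|f(y)|\,dy .
$$
By the basic composition formula (the kernel $\mathbf{1}_{I_l}(y)|f(y)|$ on $y\times l$ is totally positive of all orders), $H$ is $TP_{j+1}$, and the problem reduces to the matrix variation--diminishing theorem: if the $(j+2)\times(j+1)$ matrix $M=(H(x_i,l))$ is $TP_{j+1}$ and $v=M\epsilon$ with $\epsilon$ alternating, then $v$ cannot alternate through $j+2$ entries. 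There one expands the singular determinant $\det[M\,|\,v]=\sum_i(-1)^{i+j+1}v_i\det(M_{\hat\imath})$ into minors of order $j+1\le r$ only --- this is how the order-$(j+2)$ minors are avoided --- and the same expansion, read in the equality case, yields the ``same arrangement of signs'' conclusion. Even then the possibility that all maximal minors $\det(M_{\hat\imath})$ vanish requires separate care (an induction or perturbation of the $x_i$), which is precisely the degenerate case your smoothing step was meant to, but does not, dispose of. If you want this lemma in a write-up, cite Karlin as the paper does, or reproduce the interval-partition argument; the dual construction you propose cannot be completed under the stated hypotheses.
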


The basic tool for proving stochastic ordering of estimators
are order preserving properties of underlying stochastic orders,
for the proof see Shaked and Shanthikumar \cite{shaked_st:or}.

\begin{lem}
Assume that $g$ is an increasing function.
\begin{enumerate}
\item If $X\leq_{\textup{st}}Y$, then $g(X)\leq_{\textup{st}}g(Y)$.
\item If $X\leq_{\textup{lr}}Y$, then $g(X)\leq_{\textup{lr}}g(Y)$.
\end{enumerate}
\end{lem}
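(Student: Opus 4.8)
The plan is to treat the two parts separately, each time reducing the claim about $g(X),g(Y)$ to the hypothesis on $X,Y$ by exploiting monotonicity of $g$.

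For part (a) I would argue directly from the distribution-function definition. Fix $t\in\mathcal{R}$ and consider the upper level set $A_t=\{x:g(x)>t\}$. Since $g$ is nondecreasing, $A_t$ is an upper set, hence a right half-line $(a_t,\infty)$ or $[a_t,\infty)$ with $a_t=\inf A_t\in[-\infty,+\infty]$. Because $g(X)>t\iff X\in A_t$, we get that $P(g(X)>t)=P(X\in A_t)$ equals either $1-F_X(a_t)$ or $1-F_X(a_t^-)$, and likewise for $Y$. The hypothesis $F_X\ge F_Y$ pointwise, together with its left-limit consequence $F_X(a^-)\ge F_Y(a^-)$, then yields $P(g(X)>t)\le P(g(Y)>t)$, i.e. $F_{g(X)}(t)\ge F_{g(Y)}(t)$, for every $t$. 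This is exactly $g(X)\le_{\textup{st}}g(Y)$. The only bookkeeping needed is the trivial empty/full cases $a_t=\pm\infty$ and the open-versus-closed endpoint, both absorbed by the left-limit inequality.

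For part (b) I would pass to densities and use the $TP_2$ reformulation of the likelihood ratio order recalled above, assuming $g$ strictly increasing with inverse $g^{-1}$ so that $g(X),g(Y)$ again have densities. Encode the pair as a kernel $p(x,i)$, $i\in\{1,2\}$, by $p(x,1)=f_X(x)$, $p(x,2)=f_Y(x)$; then $X\le_{\textup{lr}}Y$ says precisely that $p$ is $TP_2$ in $(x,i)$. The transformed densities are $q(u,i)=p\bigl(g^{-1}(u),i\bigr)\,(g^{-1})'(u)$. Now I invoke the two elementary facts about $TP_r$ kernels stated before the lemma: composing the first variable with the increasing map $g^{-1}$ keeps $p(g^{-1}(u),i)$ $TP_2$, and multiplying by the nonnegative function $(g^{-1})'(u)$, which depends on $u$ alone, again preserves $TP_2$. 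Hence $q$ is $TP_2$, that is $g(X)\le_{\textup{lr}}g(Y)$. Equivalently and more transparently, the Jacobian cancels in
$$
\frac{f_{g(Y)}(u)}{f_{g(X)}(u)}=\frac{f_Y(g^{-1}(u))}{f_X(g^{-1}(u))},
$$
which is increasing in $u$ since $f_Y/f_X$ and $g^{-1}$ are both increasing.

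I expect the main obstacle to lie in the regularity implicit in part (b): the clean cancellation of the Jacobian requires $g$ to be at least strictly increasing, so that $g(X)$ stays absolutely continuous; if $g$ is merely nondecreasing the transformed law may acquire atoms and the likelihood-ratio statement has to be reinterpreted. By contrast, the argument in part (a) is robust and needs only the elementary handling of the endpoint of $A_t$.
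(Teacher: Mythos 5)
Your proposal is correct. Note that the paper itself gives no proof of this lemma: it is stated as a known preservation result and the reader is referred to Shaked and Shanthikumar for the argument, so there is no in-paper proof to compare against. Your part (a) is the standard distribution-function proof and is complete, including the correct handling of the open-versus-closed endpoint of the upper level set $A_t$ via the left-limit inequality $F_X(a^-)\ge F_Y(a^-)$, which follows by letting $x\uparrow a$ in $F_X(x)\ge F_Y(x)$. Your part (b) is likewise the standard argument: under the paper's density-based definition of $\leq_{\textup{lr}}$, the change-of-variables formula makes the Jacobian cancel in the ratio $f_{g(Y)}(u)/f_{g(X)}(u)=f_Y(g^{-1}(u))/f_X(g^{-1}(u))$, which is increasing as a composition of increasing functions; the $TP_2$ rephrasing using the two kernel facts recalled in the paper (composition with increasing maps and multiplication by nonnegative functions of a single variable preserve $TP_r$) is an equivalent packaging. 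You correctly flag the only real caveat: the density manipulation presupposes $g$ strictly increasing with enough regularity that $g(X)$ and $g(Y)$ remain absolutely continuous, which is the setting the paper implicitly works in (its definition of $\leq_{\textup{lr}}$ assumes densities exist); for merely nondecreasing $g$ one would need the more general formulation of the likelihood ratio order. In all the applications in the paper the transformations are strictly increasing and smooth, so this restriction is harmless.
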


\begin{lem}\label{l2}
Let $X_1,\dots,X_n$ and  $Y_1,\dots,Y_n$ be independent random variables.
\begin{enumerate}
\item If $X_i\leq_{\textup{st}}Y_i$, $i=1,\dots,n$,
 then $X_1+\dots +X_n\leq_{\textup{st}}Y_1+\dots + Y_n$.

\item If $X_i\leq_{\textup{lr}}Y_i$, $i=1,\dots,n$,
 then $X_1+\dots +X_n\leq_{\textup{lr}}Y_1+\dots + Y_n$,
provided these random variables have logconcave densities.
\end{enumerate}
\end{lem}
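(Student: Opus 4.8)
The plan is to prove both parts by replacing one summand at a time and chaining the comparisons by transitivity, so that everything reduces to a single-variable replacement statement. For part (a) I would use the one-step fact that if $U\le_{\textup{st}}V$ and $W$ is independent of both, then $U+W\le_{\textup{st}}V+W$; iterating along the chain
$$X_1+\cdots+X_n\ \le_{\textup{st}}\ Y_1+X_2+\cdots+X_n\ \le_{\textup{st}}\ \cdots\ \le_{\textup{st}}\ Y_1+\cdots+Y_n,$$
where at the $k$-th link $U=X_k$, $V=Y_k$ and $W=Y_1+\cdots+Y_{k-1}+X_{k+1}+\cdots+X_n$, and invoking transitivity of $\le_{\textup{st}}$ (immediate from the pointwise ordering of distribution functions), yields the claim. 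Part (b) follows from the identical chain for $\le_{\textup{lr}}$, using the corresponding one-step fact for the likelihood ratio order; here the companion $W$ must be logconcave, which is automatic because it is a sum of independent logconcave variables and, as recalled in the preliminaries, convolutions of logconcave functions are logconcave. Transitivity of $\le_{\textup{lr}}$ is also clear, a product of nonnegative increasing ratios being increasing.

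To prove the one-step statement for $\le_{\textup{st}}$ I would invoke the standard characterization that $U\le_{\textup{st}}V$ if and only if $\mathrm{E}\,\phi(U)\le\mathrm{E}\,\phi(V)$ for every increasing function $\phi$. Fix such a $\phi$ and condition on $W$: for each fixed value $w$ the function $x\mapsto\phi(x+w)$ is increasing, so $\mathrm{E}\,\phi(U+w)\le\mathrm{E}\,\phi(V+w)$. Integrating this inequality against the law of $W$, which is legitimate since $W$ is independent of $U$ and of $V$, gives $\mathrm{E}\,\phi(U+W)\le\mathrm{E}\,\phi(V+W)$ for every increasing $\phi$, that is, $U+W\le_{\textup{st}}V+W$.

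The substantive step is the one-step statement for $\le_{\textup{lr}}$, and here I would apply the basic composition formula directly. Let $f_U,f_V$ be the densities of $U,V$ and index a two-point family by $\theta\in\{1,2\}$ via $K(1,x)=f_U(x)$, $K(2,x)=f_V(x)$; the hypothesis $U\le_{\textup{lr}}V$ says exactly that $f_V/f_U$ is increasing, i.e.\ that $K(\theta,x)$ is $TP_2$. If $\phi$ is the logconcave density of $W$, then $L(x,s)=\phi(s-x)$ is a $PF_2$ kernel, hence $TP_2$ in $(x,s)$. Since the densities of $U+W$ and $V+W$ are $\int K(\theta,x)\phi(s-x)\,dx$ at $\theta=1$ and $\theta=2$, the composition formula shows this convolution is $TP_{\min(2,2)}=TP_2$ in $(\theta,s)$; equivalently the ratio of the two densities increases in $s$, which is $U+W\le_{\textup{lr}}V+W$.

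I expect the main obstacle to be bookkeeping rather than analysis. In part (b) one must check at every link that the companion $W$ really does have a logconcave density, and this rests entirely on the closure of logconcavity under convolution; were a single summand not logconcave the chain would collapse, which is precisely why the hypothesis is imposed on all the variables at once. The one genuinely conceptual point is the identification of logconcavity of $W$ with the $TP_2$ property of the translation kernel $\phi(s-x)$, since it is this that lets the composition formula carry the entire weight of the likelihood ratio argument.
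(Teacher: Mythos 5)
Your proof is correct: the one-step replacement plus transitivity handles part (a), and your reduction of the likelihood-ratio step to the basic composition formula applied to the two-row kernel $K(\theta,x)$ and the $PF_2$ kernel $\phi(s-x)$, with logconcavity of each companion $W$ guaranteed by closure of logconcavity under convolution, is exactly the standard argument. The paper itself states this lemma without proof, citing Shaked and Shanthikumar, and your argument is essentially the one given there, so there is nothing to compare beyond noting that you have supplied the omitted details correctly.
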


Let $\hat\theta$ be a moment estimator of $\theta$
based on a sample from population with density 
$f(x;\theta)$, $\theta\in\Theta\subset\mathcal{R}$.
 Let now 
$\hat\theta_1$ and $\hat\theta_2$ be 
the moment estimators obtained   on the basis
of the sample $\mathbf{X}$ from population with density $f(x;\theta_1)$
and of the sample $\mathbf{Y}$ from population with density $f(x;\theta_2)$ respectively,
where  $\theta_1<\theta_2\in\Theta$.
We say, that $\hat\theta$ is 
stochastically increasing in $\theta$ if the family  of its distributions
 is stochastically increasing in $\theta$, i.e.
$\hat\theta_1\leq_{\textup{st}}\hat\theta_2$.
We also shall be interested whether the stronger property holds
$\hat\theta_1\leq_{\textup{lr}}\hat\theta_2$, i.e.  
$\hat\theta$ is increasing in $\theta$  with respect to  likelihood ratio order.

Note, that in general the moment estimator may not be
stochastically monotone as the following example indicates.
\begin{exa}
Consider a sample $\mathbf{X}=(X_1,\dots,X_n)$ from the uniform distribution 
on the interval $(-\theta,\theta)$, $\theta>0$.
Then $E(X)=0$ and the first moment contains no information about $\theta$.
We use the second moment. Then we have $E(X^2)=\theta^2/3=m(\theta)$.
Thus the moment estimator of $\theta$ is of the form
 $\hat\theta=\sqrt{3/n\sum_{i=1}^n X_i^2}$.
Since the random variable $X$ is not stochastically increasing in $\theta$
and  $m(\theta)$ is increasing for $\theta>0$, the estimator
$\hat\theta$ is not  stochastically monotone.
\end{exa}

\section{Results}
 Assume for simplicity that
for a given function $g$ the generalized empirical moment belongs to 
$m(\Theta)$, the domain of the values $m(\theta)$, $\theta\in\Theta$, for every resulting set of observations.

The following theorem gives a sufficient conditions for likelihood ordering of 
moment estimators for one-parameter family in the case when the estimator 
is based on the sample mean.

\begin{thm}\label{t3}
Assume that the function $m(\theta)=\int_{-\infty}^{\infty} x f(x;\theta)dx<\infty$
   for all~$\theta$,
$f(x;\theta)$ is  $TP_2$
and  $f(x;\theta)$ is logconcave in x.
Then the moment estimator $\hat\theta$
is increasing in $\theta$ with respect to likelihood ratio order.
\end{thm}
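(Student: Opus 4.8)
The plan is to read the estimator as the composition $\hat\theta = m^{-1}(\bar X)$ with $\bar X = \frac1n\sum_{i=1}^n X_i$, and to push a likelihood ratio ordering from the individual observations, through the sample sum, and finally through the increasing map $t\mapsto m^{-1}(t/n)$. Concretely, writing $X_i$ for observations with density $f(x;\theta_1)$ and $Y_i$ for observations with density $f(x;\theta_2)$, I would first establish $\bar X \leq_{\textup{lr}} \bar Y$ and then argue that $m^{-1}$ is increasing, so that applying the preservation lemma for the likelihood ratio order under increasing transformations yields $\hat\theta_1 = m^{-1}(\bar X) \leq_{\textup{lr}} m^{-1}(\bar Y) = \hat\theta_2$.

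For the marginal ordering, I would use that $TP_2$ of $f(x;\theta)$ is exactly the statement that the family has monotone likelihood ratio in $\theta$, as recorded in the preliminaries; hence $X_i \leq_{\textup{lr}} Y_i$ for each $i$ since $\theta_1 < \theta_2$. The hypothesis that $f(x;\theta)$ is logconcave in $x$ says precisely that each observation has a $PF_2$ density, which is the standing assumption needed to invoke Lemma \ref{l2}(b); applying it gives $\sum_{i=1}^n X_i \leq_{\textup{lr}} \sum_{i=1}^n Y_i$. Since $t \mapsto t/n$ is increasing, preservation of the likelihood ratio order under increasing maps then delivers $\bar X \leq_{\textup{lr}} \bar Y$.

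It remains to check that $m^{-1}$ is increasing. Because $TP_2$ forces the family to be stochastically increasing in $\theta$ (the likelihood ratio order implies the usual stochastic order), we have $X(\theta_1) \leq_{\textup{st}} X(\theta_2)$, and evaluating the defining inequality of $\leq_{\textup{st}}$ on the increasing identity function gives $m(\theta_1) \leq m(\theta_2)$, the means being finite by hypothesis; thus $m$ is nondecreasing and $m^{-1}$ is increasing. Composing with the scaling and applying the transformation lemma once more completes the argument. The main obstacle, and the reason the logconcavity hypothesis is indispensable, is the convolution step: the likelihood ratio order is \emph{not} preserved under sums in general, and it is exactly the $PF_2$ property of the densities that rescues Lemma \ref{l2}(b); everything else is a routine chain of order-preservation facts.
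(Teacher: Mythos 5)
Your proposal is correct and follows essentially the same route as the paper's proof: marginal $\leq_{\textup{lr}}$ ordering from the $TP_2$ hypothesis, preservation under convolution via Lemma~\ref{l2}(b) using logconcavity, and then the increasing map $m^{-1}(\,\cdot\,/n)$. The only (immaterial) difference is in how monotonicity of $m$ is obtained: the paper applies the variation diminishing property of $TP_2$ kernels to $\int(x-c)f(x;\theta)\,dx$, whereas you use that $\leq_{\textup{lr}}$ implies $\leq_{\textup{st}}$ and evaluate the expectation of the identity function --- both arguments are valid and yield the same conclusion.
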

\begin{proof}
Since $f(x;\theta)$ is $TP_2$ we have
ordering $X_i(\theta_1)\leq_{\textrm{lr}}X_{i}(\theta_2)$
for all $\theta_1<\theta_2\in\Theta$ and $i=1,\ldots,n$.
From the variation diminishing 
property (see Lemma~\ref{t2}) we deduce that function 
$$
m(\theta)-c = \int_{-\infty}^{\infty} (x-c)f(x;\theta)dx
$$
changes sign at most one for any $c$,
from $+$ to $-$ if  occurs. This implies that
$m(\theta)$  is  increasing. Hence
the  moment estimator is of the form 
$\hat\theta=m^{-1}(\bar X)$.
Since $f$ is logconcave,
we conclude from Lemma~\ref{l2}~(b) that
$\bar X$ is increasing in $\theta$ with respect to  
likelihood ratio order.
Theorem follows from the preserving property of monotone likelihood
order  under increasing operations.
\end{proof}

If we weakness the assumptions of Theorem \ref{t3} we can obtain the following
theorem.

\begin{thm}\label{t4}
Assume that the function
 $m(\theta)=\int_{-\infty}^{\infty} x f(x;\theta)dx<\infty$
  for all $\theta$ and
$f(x;\theta)$ is $TP_2$.
Then the  moment estimator of $\theta$
is stochastically increasing in $\theta$.
\end{thm}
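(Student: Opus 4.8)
The plan is to follow the same architecture as the proof of Theorem~\ref{t3}, but to replace the likelihood-ratio tools by their usual-stochastic-order counterparts, which do not require logconcavity. First I would use the hypothesis that $f(x;\theta)$ is $TP_2$ to obtain the componentwise ordering $X_i(\theta_1)\leq_{\textup{lr}}X_i(\theta_2)$ for all $\theta_1<\theta_2\in\Theta$ and each $i$; since likelihood ratio order is stronger than usual stochastic order, this also yields $X_i(\theta_1)\leq_{\textup{st}}X_i(\theta_2)$.

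Next I would verify that $m$ is increasing, a step identical to the corresponding one in Theorem~\ref{t3} that uses only total positivity and not logconcavity. By the variation diminishing property (Lemma~\ref{t2}), for every real $c$ the function $\theta\mapsto m(\theta)-c=\int_{-\infty}^{\infty}(x-c)f(x;\theta)\,dx$ changes sign at most once, and from $+$ to $-$ when it does; this forces $m$ to be increasing. Consequently $m^{-1}$ exists and is increasing, and the moment estimator takes the form $\hat\theta=m^{-1}(\bar X)$.

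The crucial difference from Theorem~\ref{t3} lies in the aggregation step. Instead of Lemma~\ref{l2}(b), which requires logconcave densities, I would invoke Lemma~\ref{l2}(a): from $X_i(\theta_1)\leq_{\textup{st}}X_i(\theta_2)$ for each $i$ it follows that $\sum_{i=1}^n X_i(\theta_1)\leq_{\textup{st}}\sum_{i=1}^n X_i(\theta_2)$, and dividing by $n$, an increasing operation that preserves usual stochastic order, gives $\bar X(\theta_1)\leq_{\textup{st}}\bar X(\theta_2)$.

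Finally I would apply the increasing map $m^{-1}$ and use the preservation of usual stochastic order under increasing transformations (part~(a) of the order-preserving lemma stated above) to conclude $\hat\theta_1=m^{-1}(\bar X(\theta_1))\leq_{\textup{st}}m^{-1}(\bar X(\theta_2))=\hat\theta_2$. I do not anticipate a genuine obstacle: the argument is strictly easier than that of Theorem~\ref{t3}, since dropping logconcavity costs only the passage from likelihood ratio order to the weaker usual stochastic order, while every tool needed for the weaker conclusion remains available. The single point worth checking with care is that the monotonicity of $m$ depends on $TP_2$ alone, so that removing the logconcavity hypothesis does not disturb the representation $\hat\theta=m^{-1}(\bar X)$ with $m^{-1}$ increasing.
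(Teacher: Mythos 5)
Your proposal is correct and is essentially the argument the paper intends: the paper omits an explicit proof of Theorem~\ref{t4}, but the intended route is exactly the usual-stochastic-order analogue of the proof of Theorem~\ref{t3}, replacing Lemma~\ref{l2}~(b) by Lemma~\ref{l2}~(a) so that logconcavity is no longer needed. One small slip (inherited from the paper's own wording): since $x-c$ changes sign from $-$ to $+$, the variation diminishing property gives that $m(\theta)-c$ changes sign from $-$ to $+$ as well, which is what forces $m$ to be increasing.
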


In the general case we can formulate the following corollary. 
We omit the proof because it follows by the same method as in Theorem \ref{t3}.
\begin{cor}\label{cor1}
Assume that the function $g$ is increasing and the 
integral $m(\theta)=\int_{-\infty}^{\infty} g(x)f(x;\theta)dx<\infty$  for all $\theta$.
If $f(x;\theta)$ is $TP_2$, then the moment estimator $\hat\theta$
of the form (\ref{mm:form}) is stochastically increasing
in $\theta$. Moreover, if the random variable $g(X)$ has 
logconcave density, then the estimator $\hat\theta$
is increasing in $\theta$ with respect to  monotone likelihood ratio order.
\end{cor}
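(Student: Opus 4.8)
The plan is to run the argument of Theorem~\ref{t3} with $x$ replaced throughout by the increasing transform $g(x)$. First I would record that, because $f(x;\theta)$ is $TP_2$, the family is ordered in the likelihood ratio order, $X_i(\theta_1)\leq_{\textup{lr}}X_i(\theta_2)$ for every $\theta_1<\theta_2$ and every $i$; in particular $X_i(\theta_1)\leq_{\textup{st}}X_i(\theta_2)$. Next I would show that $m$ is monotone, so that the estimator $\hat\theta=m^{-1}(\bar g)$ is genuinely defined. To this end, fix $c\in\mathcal{R}$ and write
$$
m(\theta)-c=\int_{-\infty}^{\infty}\bigl(g(x)-c\bigr)f(x;\theta)\,dx .
$$
Viewing $f(x;\theta)$ as a $TP_2$ kernel in the pair $(\theta,x)$ and noting that $g(x)-c$ changes sign at most once (from $-$ to $+$, since $g$ is increasing), the variation diminishing property (Lemma~\ref{t2}) with $r=2$, $j=1$ shows that $\theta\mapsto m(\theta)-c$ changes sign at most once and, when it does, with the same arrangement of signs. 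Hence $m$ is increasing, $m^{-1}$ exists and is increasing, and $\hat\theta=m^{-1}(\bar g)$.

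For the usual stochastic order, since $g$ is increasing the preservation of $\leq_{\textup{st}}$ under increasing maps gives $g(X_i(\theta_1))\leq_{\textup{st}}g(X_i(\theta_2))$. Summing these independent variables via Lemma~\ref{l2}(a) yields $\sum_i g(X_i(\theta_1))\leq_{\textup{st}}\sum_i g(X_i(\theta_2))$, and dividing by $n$ (again an increasing map) gives $\bar g_1\leq_{\textup{st}}\bar g_2$. Applying the increasing function $m^{-1}$ and once more invoking preservation of $\leq_{\textup{st}}$ under increasing maps, we obtain $\hat\theta_1=m^{-1}(\bar g_1)\leq_{\textup{st}}m^{-1}(\bar g_2)=\hat\theta_2$, i.e.\ $\hat\theta$ is stochastically increasing. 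For the likelihood ratio statement I would repeat the same chain with $\leq_{\textup{lr}}$ in place of $\leq_{\textup{st}}$: the increasing map $g$ preserves $\leq_{\textup{lr}}$, so $g(X_i(\theta_1))\leq_{\textup{lr}}g(X_i(\theta_2))$, and here the additional hypothesis that $g(X)$ has a logconcave density is exactly what licenses Lemma~\ref{l2}(b), giving $\sum_i g(X_i(\theta_1))\leq_{\textup{lr}}\sum_i g(X_i(\theta_2))$ and hence $\bar g_1\leq_{\textup{lr}}\bar g_2$; applying the increasing map $m^{-1}$ then yields $\hat\theta_1\leq_{\textup{lr}}\hat\theta_2$.

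The conceptual heart of the argument, and the step I expect to need the most care, is the monotonicity of $m$ via Lemma~\ref{t2}: one must correctly identify $f(x;\theta)$ as a $TP_2$ kernel in the pair $(\theta,x)$ and track the orientation of the single sign change of $g(x)-c$ so as to conclude that $m$ increases rather than decreases. Everything else is the routine transport of the orderings through the increasing maps $g$, $x\mapsto x/n$, and $m^{-1}$, together with the summation Lemma~\ref{l2}; the only genuine hypothesis doing work in the second part is the logconcavity of the density of $g(X)$, which is imposed precisely to meet the requirement of Lemma~\ref{l2}(b).
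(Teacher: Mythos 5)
Your proof is correct and is exactly the argument the paper intends: the paper omits the proof of Corollary~\ref{cor1}, stating only that it follows by the method of Theorem~\ref{t3}, and your write-up carries out precisely that method with $x$ replaced by $g(x)$ (variation diminishing for monotonicity of $m$, then transport of $\leq_{\textup{st}}$ and $\leq_{\textup{lr}}$ through $g$, the sum, and $m^{-1}$, with logconcavity of the density of $g(X)$ feeding Lemma~\ref{l2}(b)). Your tracking of the sign change of $g(x)-c$ as going from $-$ to $+$ is in fact more carefully stated than the corresponding line in the paper's proof of Theorem~\ref{t3}.
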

The next Corollary follows immediately from Corollary \ref{cor1}
and form Proposition~2 due to An \cite{an}.
\begin{cor}\label{cor2}
Let $X$ be a random variable  with a density $f(x;\theta)$
and $g$ be a~function such the integral
 $m(\theta)=\int_{-\infty}^{\infty} g(x)f(x;\theta)dx<\infty$ for all $\theta$. Assume that the following conditions are satisfied: \newpage
\begin{enumerate}
\item $g$  is strictly increasing, concave and differentiable;
\item $|\frac{\partial}{\partial x}v(x)|$ is logconcave, where $v=g^{-1}$ ;
\item $f$ is logconcave,  decreasing on the support of~$X$;
\item $f(x;\theta)$ is $TP_2$.
\end{enumerate}
Then the moment estimator $\hat\theta$ is increasing in $\theta$ 
with respect to   likelihood ratio order.
\end{cor}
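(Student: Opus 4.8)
The plan is to deduce the result directly from Corollary~\ref{cor1}. Condition~(d) supplies the hypothesis that $f(x;\theta)$ is $TP_2$ and condition~(a) guarantees that $g$ is increasing, so the stochastic-ordering conclusion of Corollary~\ref{cor1} already holds. To upgrade it to the likelihood ratio order it remains only to verify the extra hypothesis of that corollary, namely that the random variable $g(X)$ has a logconcave density. The entire substance of the argument thus lies in deducing this logconcavity from conditions~(a)--(c); once it is in hand, the corollary follows at once.

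First I would record the density of $Y=g(X)$. Since $g$ is strictly increasing and differentiable by~(a), its inverse $v=g^{-1}$ is strictly increasing and differentiable, so $v'>0$ and the change-of-variables formula gives
$$
f_Y(y;\theta)=f\bigl(v(y);\theta\bigr)\,v'(y).
$$
Passing to logarithms, logconcavity of $f_Y$ in $y$ is equivalent to concavity of
$$
\log f_Y(y;\theta)=\log f\bigl(v(y);\theta\bigr)+\log v'(y),
$$
so it suffices to show that each of the two summands is concave.

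The second summand equals $\log|v'|$ and is concave precisely by condition~(b). For the first summand I would use the elementary composition rule for concavity: by~(a) the map $g$ is concave and increasing, hence its inverse $v$ is convex and increasing, while by~(c) the function $f$ is logconcave and decreasing, so $\log f$ is concave and decreasing. The composition of a concave decreasing function with a convex function is concave (the two contributions to $(\log f\circ v)''$ are nonpositive, one because $\log f$ is concave, the other because $\log f$ is decreasing and $v$ is convex). Therefore $\log f(v(y);\theta)$ is concave. Adding the two concave summands shows $f_Y$ is logconcave, which is exactly the statement packaged as Proposition~2 of An~\cite{an}; Corollary~\ref{cor1} then yields $\hat\theta_1\leq_{\textup{lr}}\hat\theta_2$.

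The step I expect to be the crux is the concavity of the first summand, since it is here that the \emph{decreasing} part of hypothesis~(c) and the \emph{concavity} part of hypothesis~(a) are genuinely needed: logconcavity of $f$ is not in general preserved under an increasing transformation, and it is the interplay between the curvature of $f$ and that of the reparametrization $v$ that rescues it. Carrying this out correctly amounts to checking that conditions~(a)--(c) are precisely matched to the hypotheses of An's Proposition~2, which is what makes the deduction immediate.
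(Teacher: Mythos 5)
Your proposal is correct and follows exactly the route the paper intends: the corollary is stated to follow from Corollary~\ref{cor1} together with Proposition~2 of An~\cite{an}, and your verification that conditions (a)--(c) force the density of $g(X)$ to be logconcave is precisely the content of that proposition. The only difference is that you prove An's result from scratch via the change-of-variables and composition-of-concavity argument, which makes the deduction self-contained but does not change the structure of the proof.
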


In many situations  we deal with one-parameter
exponential family with
densities of the form 
\begin{equation}\label{def_exp}
f(x;\theta)=h(x)c(\theta)\exp(\eta(\theta)T(x)), \ \theta\in\Theta,\ x\in(a,b).
\end{equation}
Recall the well known formula for moments of $T(X)$, see Berger and Casella \cite{berger}:
\begin{equation}\label{m_exp}
E_{\theta}(T(X))=-\frac{[\log c(\theta)]'}{[\eta(\theta)]'},
\end{equation}
\begin{equation}\label{v_exp}
Var_{\theta}(T(X))=\frac{-[\log c(\theta)]'' +E_{\theta}[T(X)]\cdot[\eta(\theta)]'' }{([\eta(\theta)]')^2}.
\end{equation}

It is easy to prove that if 
both $T$ and $\eta$ are  increasing (decreasing), then
$f(x;\theta)$ is $TP_2$ and from the variation diminishing property it follows
that $m(\theta)=E_{\theta}(T(X))$ is increasing  (decreasing).
Combining those facts with order preserving
properties of the usual stochastic order we can formulate the following theorem.

\begin{thm}\label{t5} For the one-parameter exponential family with densities of the form (\ref{def_exp}), where both
$\eta$ and $T$ are  increasing (decreasing), the moment estimator
$\hat\theta=m^{-1}(1/n\sum_{i=1}^n T(X_i))$ is stochastically increasing in $\theta$.
\end{thm}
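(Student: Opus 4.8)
The plan is to treat Theorem~\ref{t5} as the special case $g=T$ of Corollary~\ref{cor1}, after verifying the two structural facts the preceding discussion takes for granted — that $f(x;\theta)$ is $TP_2$ and that $m(\theta)=E_\theta(T(X))$ is monotone — and then to dispose of the decreasing case separately, since Corollary~\ref{cor1} is phrased only for increasing $g$.

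First I would confirm that $f(x;\theta)$ is $TP_2$. The kernel $\exp(st)$ is totally positive of every order in $(s,t)$, so by the composition rule recorded in the excerpt (if $k$ is $TP_r$ and $u,v$ are increasing, then $k(u(\cdot),v(\cdot))$ is again $TP_r$) the factor $\exp(\eta(\theta)T(x))$ is $TP_2$ in $(x,\theta)$: in the increasing case take $u=T$, $v=\eta$, while in the decreasing case write $\exp(\eta(\theta)T(x))=\exp\bigl((-\eta(\theta))(-T(x))\bigr)$ and use that $-\eta,-T$ are increasing. Multiplying by the nonnegative single-variable factors $h(x)$ and $c(\theta)$ preserves $TP_2$, so $f$ is $TP_2$; equivalently, when $f$ is smooth one may just note $\frac{\partial^2}{\partial x\,\partial\theta}\log f=\eta'(\theta)T'(x)\ge 0$, a product of two factors of equal sign. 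Applying Lemma~\ref{t2} to $m(\theta)-c=\int(T(x)-c)f(x;\theta)\,dx$ then shows this changes sign at most once in $\theta$ for every $c$, so $m$ is monotone — increasing when $\eta,T$ increase, decreasing when they decrease — and $m^{-1}$ is monotone in the same direction. In the increasing case the theorem is now immediate from Corollary~\ref{cor1} with $g=T$, whose hypotheses ($T$ increasing, $m(\theta)<\infty$ by (\ref{m_exp}), and $f$ being $TP_2$) are exactly what we have shown.

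The only point demanding care — and the step I expect to be the sole source of friction — is the decreasing case, where $g=T$ is not increasing and Corollary~\ref{cor1} does not apply verbatim; here I would run the same chain while bookkeeping two reversals that cancel. Fix $\theta_1<\theta_2$. Since $f$ is $TP_2$ we have $X_i(\theta_1)\le_{\textup{lr}}X_i(\theta_2)$, hence $X_i(\theta_1)\le_{\textup{st}}X_i(\theta_2)$. As $T$ is decreasing it reverses the usual order, giving $T(X_i(\theta_2))\le_{\textup{st}}T(X_i(\theta_1))$, and Lemma~\ref{l2}(a) upgrades this to $\tfrac1n\sum_i T(X_i(\theta_2))\le_{\textup{st}}\tfrac1n\sum_i T(X_i(\theta_1))$. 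Finally $m^{-1}$ is decreasing, so applying it reverses the order a second time and produces $\hat\theta_1=m^{-1}\bigl(\tfrac1n\sum_i T(X_i(\theta_1))\bigr)\le_{\textup{st}}m^{-1}\bigl(\tfrac1n\sum_i T(X_i(\theta_2))\bigr)=\hat\theta_2$. Hence $\hat\theta_1\le_{\textup{st}}\hat\theta_2$ in both cases, which is precisely the claim that $\hat\theta$ is stochastically increasing in $\theta$.
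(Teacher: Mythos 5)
Your proposal is correct and follows essentially the same route the paper sketches for this theorem: total positivity of $f(x;\theta)$ from the monotonicity of $\eta$ and $T$, monotonicity of $m$ via the variation diminishing property, and then the order-preserving properties of $\leq_{\textup{st}}$ under independent sums and monotone maps. You merely supply the details the paper leaves implicit, in particular the careful bookkeeping of the two cancelling order reversals in the decreasing case, which is a welcome addition rather than a deviation.
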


Let us make the following observations.
 Now we consider  
the maximum likelihood estimation of $\theta$ for the one-parameter exponential family
(\ref{def_exp}).
Let $\mathbf{x}=(x_1,\dots,x_n)$ be a value of a sample $\mathbf{X}$.
The maximum likelihood function is of the form
$$
L(\mathbf{x};\theta)=\prod_{i=1}^n h(x_i) [c(\theta)]^n \exp\left(\eta(\theta)\sum_{i=1}^n T(x_i)\right).
$$ 
So we have
$$
\frac{\partial \log L(\mathbf{x};\theta)}{\partial \theta}=
 n [\log c(\theta)]' + [\eta(\theta)]' \sum_{i=1}^n T(x_i).
$$
It is clear that
$
\dfrac{\partial \log L(\mathbf{x};\theta)}{\partial \theta}=0 
$
if and only if
\begin{equation}\label{eq_mle}
\frac{1}{n}\sum_{i=1}^{n} T(x_i)=-\frac{[\log c(\theta)]'}{[\eta(\theta)]'}.
\end{equation}
Let $\hat\theta$ be a solution of the equation $(\ref{eq_mle})$.
 Then $\hat\theta$
is the maximum likelihood estimator (MLE) since using (\ref{v_exp}) we have
$$
\frac{\partial^2 \log L(\mathbf{x};\theta)}{\partial \theta^2}\bigg|_{\theta=\hat\theta}=
-n ([\log c(\theta)]'')^2|_{\theta=\hat\theta} \cdot Var_{\hat\theta}(T(X))<0.
$$
On the other hand $\hat\theta$ is the moment estimator, since 
$E_{\theta}(T(X))=-\dfrac{[\log c(\theta)]'}{[\eta(\theta)]'}$.

Thus from Theorem \ref{t5} we have the following result.

\begin{thm}\label{t6} For the one-parameter exponential family
 with densities of the form (\ref{def_exp}), where both
$\eta$ and $T$ are  increasing (decreasing), the maximum likelihood estimator
 $\hat\theta$ is stochastically increasing in $\theta$.
\end{thm}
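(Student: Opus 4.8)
The plan is to show that for this exponential family the maximum likelihood estimator coincides with the moment estimator based on the choice $g = T$, and then to invoke Theorem~\ref{t5}; the identification is essentially contained in the observations immediately preceding the statement, so the proof reduces to assembling them carefully. First I would recall that, as computed above, the likelihood equation $\partial_\theta \log L(\mathbf{x};\theta)=0$ is equivalent to equation~(\ref{eq_mle}), namely $\frac{1}{n}\sum_{i=1}^n T(x_i) = -[\log c(\theta)]'/[\eta(\theta)]'$. By the moment formula~(\ref{m_exp}) the right-hand side is exactly $E_\theta(T(X)) = m(\theta)$, where $m$ is now the mean of the statistic $T(X)$ that defines the moment method for $g=T$. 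Thus the likelihood equation is literally the moment equation $m(\theta) = \frac{1}{n}\sum_{i=1}^n T(x_i)$.

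Next I would verify that its solution is the unique maximizer, so that $\hat\theta = m^{-1}\bigl(\frac{1}{n}\sum_{i=1}^n T(X_i)\bigr)$ is well defined and agrees with the moment estimator of Theorem~\ref{t5}. The second-derivative computation displayed above yields $\partial^2_\theta \log L(\mathbf{x};\theta)\big|_{\theta=\hat\theta} < 0$, using~(\ref{v_exp}) together with $Var_{\hat\theta}(T(X)) > 0$, so the critical point is a strict local maximum. Moreover, under the hypothesis that $\eta$ and $T$ are both increasing (or both decreasing), the density $f(x;\theta)$ is $TP_2$, and by the variation diminishing property (as noted just before Theorem~\ref{t5}) the function $m(\theta) = E_\theta(T(X))$ is monotone; hence $m^{-1}$ exists and the maximizer is unique. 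This establishes that the MLE $\hat\theta$ is precisely the moment estimator $m^{-1}\bigl(\frac{1}{n}\sum_{i=1}^n T(X_i)\bigr)$.

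The conclusion is then immediate from Theorem~\ref{t5}, which asserts that this moment estimator is stochastically increasing in $\theta$; since $\hat\theta$ coincides with it, the maximum likelihood estimator is stochastically increasing as well. The only step deserving attention is the identification itself---confirming that the root of~(\ref{eq_mle}) is genuinely the MLE and genuinely the moment estimator---but both halves are already secured, the first by the negativity of the second derivative and the second by formula~(\ref{m_exp}), so no essential obstacle remains beyond this bookkeeping.
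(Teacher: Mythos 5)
Your proposal is correct and follows essentially the same route as the paper: the text immediately preceding Theorem~\ref{t6} makes exactly this identification of the MLE with the moment estimator for $g=T$ via equation~(\ref{eq_mle}), the moment formula~(\ref{m_exp}), and the negativity of the second derivative from~(\ref{v_exp}), and then deduces the result from Theorem~\ref{t5}. No further comment is needed.
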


Particularly, Theorems 2 and 3 of Balakrishnan and Mi \cite{bal_01} give   conditions
under which  maximum likelihood estimators
for the one-parameter exponential family of the form (\ref{def_exp})
are  stochastically increasing.
In the  Theorem \ref{t6} we have proved it  but under weaker assumptions.

\begin{exa}
Let  $\mathbf{X}=(X_1,\dots,X_n)$ be a sample from the distribution with 
density
$$
f(x;\theta)=\sqrt{\frac{\theta}{\pi x^3}}\exp (-\theta/x), \ \ x>0, \ \theta>0.
$$
This is one-parameter exponential family with $T(x)=1/x$ and $\eta(\theta)=-\theta$. 
Using (\ref{m_exp}) we get moment estimator $\hat\theta=(2/n\sum_{i=1}^n 1/X_i)^{-1}$.
By Theorem \ref{t5} the estimator $\hat\theta$ is  stochastically increasing in $\theta$. Of course, $\hat\theta$
is also maximum likelihood estimator.
\end{exa}


\begin{exa}
Let  $\mathbf{X}=(X_1,\dots,X_n)$ be a sample 
from the gamma distribution with density 
$$
f(x;\lambda) = \frac{1}{\Gamma(\alpha) \lambda^\alpha} x^{\alpha -1} \exp (-x/\lambda), \ \ x>0, \ \lambda>0,
$$
where $\alpha>0$ is known.
This is clearly exponential family with $T(x)=x$ and $\eta(\lambda)=-1/\lambda$,  
so 
$$
E_{\lambda}(X)=\frac{\frac{\partial}{\partial \lambda} (-\alpha \log \lambda - \Gamma(\alpha))} { \frac{\partial}{\partial\lambda}(\frac{1}{\lambda})}
=\alpha \lambda.
$$
By   Theorem \ref{t3} the  
estimator $\hat\lambda= \bar X/\alpha$ 
is stochastically
increasing in $\lambda$.
Moreover, if $\alpha\geq 1$, then the density $f$ is 
logconcave and by  Theorem \ref{t3} the estimator
$\hat\lambda$ for $\alpha\geq 1$
 is also increasing with respect to  likelihood ratio order.

On the other hand, assume now that  $\lambda$ is fixed and $\alpha$ is unknown.
This is also one-parameter exponential family with $T(x)=\log x$ and $\eta(\alpha)=\alpha$.
Using (\ref{m_exp})  we get
$$
E_{\alpha}(\log(X))=\frac{\partial}{\partial \alpha} (\log \Gamma(\alpha)+\alpha\log\lambda)=
\Psi(\alpha)+\log \lambda=m(\alpha),
$$
where $\Psi(\alpha)=\frac{\partial}{\partial \alpha}\log\Gamma(\alpha)$ 
is the digamma function.
By Theorem \ref{t5} the estimator $\hat\alpha=m^{-1}(\bar T)$
stochastically
increasing in $\alpha$. This estimator obtained by the method
of maximum likelihood was considered
in Example 2 of Balakrishnan and Mi \cite{bal_01}.
\end{exa}

\begin{exa}
Let $\mathbf{X}=(X_1,\dots,X_n)$  be a sample from the
logistic distribution with density 
$$
f(x;\theta)=\frac{\theta \exp{(-x)}}{(1+\exp{(-x)})^{(\theta+1)}},
 \ \ x\in\mathcal{R}, \ \theta>0.
$$
This is the one-parameter exponential family with $T(x)=\log(1+\exp(-x))$ 
and $\eta(\theta)=\theta$.
From (\ref{m_exp}) we get
$
E_{\theta}(T(X))= 1/\theta,
$
hence from Theorem \ref{t3} the estimator $\hat\theta=1/\bar T$
is stochastically increasing in $\theta$.
\end{exa}

\begin{exa}
Let  $\mathbf{X}=(X_1,\dots,X_n)$ be a sample from the
uniform distribution on the interval $(0,\theta)$, $\theta>0$.
Them moment estimator of $\theta$ based on the first empirical moment
is of the form $\hat\theta=2 \bar X$ and by Theorem \ref{t3} this estimator is increasing in $\theta$
with respect to  likelihood ratio order.
Consider another moment estimator  based on generalized moment $\bar g =1/n\sum_{i=1}^n \log{X_i}$.
Easy calculations show, that $E_{\theta}(g(X))=\log{\theta}~-~1$, where $g(x)=\log x$,
 thus from Corollary \ref{cor2}  
the moment estimator $\hat\theta=\exp(1/n\sum_{i=1}^n\log{X_i}-1)$ is 
also increasing in $\theta$
with respect to  likelihood ratio order.
\end{exa}

Now we consider the case when  our family of distribution is the location family.
Then we can formulate the following theorem.

\begin{thm}\label{t_5}
Let $\mathcal{F}=\{ f(x-\theta),\theta\in\mathcal{R}\}$
 be the location family. Suppose that
 $\mu_1=E(X(0))=\int_{-\infty}^{\infty} xf(x)dx<\infty$.
Then the estimator
 $\hat\theta=\bar X-\mu_1$ is stochastically increasing in $\theta$. 
Moreover, if $f$ is logconcave then  $\hat\theta$ is also increasing in $\theta$ 
with respect to  likelihood ratio order.
\end{thm}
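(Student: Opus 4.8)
The plan is to first pin down the explicit shape of the estimator and then feed it into the machinery already developed for sums and increasing transformations. First I would compute $m(\theta)=E(X(\theta))=\int_{-\infty}^{\infty} x\,f(x-\theta)\,dx$ by the substitution $y=x-\theta$, obtaining $m(\theta)=\int_{-\infty}^{\infty}(y+\theta)f(y)\,dy=\theta+\mu_1$. Thus $m$ is continuous and strictly increasing with slope one, so $m^{-1}(t)=t-\mu_1$, the range $m(\Theta)=\mathcal{R}$ contains $\bar X$ automatically, and the moment estimator based on the first empirical moment is precisely $\hat\theta=m^{-1}(\bar X)=\bar X-\mu_1$, as asserted.

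For the stochastic ordering I would deliberately \emph{avoid} Theorem~\ref{t4}, since a general location family need not be $TP_2$ (that property would already force logconcavity of $f$). Instead I would rely on the elementary fact recalled in the preliminaries that every location family is stochastically increasing: for $\theta_1<\theta_2$ one has $F(x-\theta_1)\geq F(x-\theta_2)$ for all $x$, hence $X_i(\theta_1)\leq_{\textup{st}}X_i(\theta_2)$ for each $i$. Applying Lemma~\ref{l2}(a) gives $\sum_i X_i(\theta_1)\leq_{\textup{st}}\sum_i X_i(\theta_2)$, and because $t\mapsto t/n$ and $t\mapsto t-\mu_1$ are increasing maps, the preservation of the usual stochastic order under increasing operations yields $\hat\theta_1=\bar X(\theta_1)-\mu_1\leq_{\textup{st}}\bar X(\theta_2)-\mu_1=\hat\theta_2$.

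For the likelihood ratio statement I would exploit logconcavity of $f$ in two distinct ways. On the one hand, a logconcave density is $PF_2$, so the location density $f(x-\theta)$ is $TP_2$ in $(x,\theta)$, which is exactly the componentwise ordering $X_i(\theta_1)\leq_{\textup{lr}}X_i(\theta_2)$ for $\theta_1<\theta_2$. On the other hand, each shifted density $f(x-\theta)$ is itself logconcave, which is precisely the hypothesis required to apply Lemma~\ref{l2}(b). That lemma then gives $\sum_i X_i(\theta_1)\leq_{\textup{lr}}\sum_i X_i(\theta_2)$, and one further use of the preservation of the likelihood ratio order under the increasing maps $t\mapsto t/n$ and $t\mapsto t-\mu_1$ produces $\hat\theta_1\leq_{\textup{lr}}\hat\theta_2$. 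Equivalently, one could note that $f(x-\theta)$ is simultaneously $TP_2$ and logconcave in $x$, so the likelihood ratio conclusion is a direct instance of Theorem~\ref{t3}.

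The only point demanding real care is keeping the two conclusions on separate footings. The stochastic-order half must rest on the bare monotonicity of the location family rather than on total positivity, since without logconcavity $f(x-\theta)$ need not be $TP_2$; this is where the argument genuinely departs from the earlier theorems. In the second half the logconcavity assumption carries a double load: it both supplies the $PF_2$ property that furnishes the componentwise likelihood-ratio ordering and guarantees the logconcave densities that Lemma~\ref{l2}(b) needs to close that ordering under convolution. Beyond verifying these two roles, no obstacle remains, as the computation of $m$ and the closure under the affine increasing maps are routine.
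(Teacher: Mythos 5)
Your proof is correct and follows essentially the same route as the paper: the first claim rests on the location family being stochastically increasing in $\theta$ together with preservation of $\leq_{\textup{st}}$ under sums and increasing maps, and the second on logconcavity giving both the $TP_2$ (i.e.\ $\leq_{\textup{lr}}$) property of $f(x-\theta)$ and the logconcave-density hypothesis needed to close the likelihood ratio order under convolution (Lemma~\ref{l2}(b)). Your version merely makes explicit the computation $m(\theta)=\theta+\mu_1$ and the appeals to Lemmas~\ref{l2}(a) and (b), which the paper's terser proof leaves implicit.
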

\begin{proof}
The estimator $\theta$ is stochastically increasing in $\theta$
since the family $\mathcal{F}$ is stochastically increasing in~$\theta$.
If we assume that $f$ is logconcave, i.e. $f(x-\theta)$ is $TP_2$,
then the family $\mathcal{F}$ has monotone likelihood ratio.
Since the convolution of logconcave functions is the logconcave function
we deduce that the estimator $\hat\theta$ is also increasing with
respect to  likelihood ratio order.
\end{proof}

Similar results we may obtain for a scale parameter family.
The ordering property  of the moment estimators of $\theta$
in this case is described by the following theorem.

\begin{thm}\label{t7} 
Let  $\mathcal{F}=\{1/\theta f(x/\theta),\theta\in R_+\}$, $x>0$, be the scale parameter family.
Suppose that exists $k$-th moment of the  random variable $X(1)$
with density $f(x)$ and  $E(X^k(1))=~\mu_k$.
 Then the moment estimator $\hat \theta=\sqrt[k]{ m_k/\mu_k}$ is stochastically
increasing in $\theta$. Moreover, if $Var(X(1))=\sigma^2<\infty$,
 where $S^2= m_2 - m_1^2$
is the sample variance, then the
another  moment estimator for $\theta$ 
 given by $\hat\theta=\sqrt{S^2}/\sigma$,
 is also stochastically increasing in $\theta$.
\end{thm}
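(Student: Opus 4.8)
The plan is to exploit the defining feature of a scale family, that $X(\theta)$ has the same distribution as $\theta\,X(1)$, and to couple the two samples accordingly. Writing $Z_1,\dots,Z_n$ for i.i.d.\ variables with density $f$, the sample from $1/\theta\,f(x/\theta)$ may be realised as $\theta Z_1,\dots,\theta Z_n$ simultaneously for every $\theta>0$. Both proposed estimators are \emph{scale-equivariant}, i.e.\ $\hat\theta(\theta z_1,\dots,\theta z_n)=\theta\,\hat\theta(z_1,\dots,z_n)$, so under this coupling each becomes $\theta$ times a fixed nonnegative random variable free of $\theta$; monotonicity in $\theta$ then holds pointwise, which is stronger than the usual stochastic order required.

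For the first estimator I would begin by computing $E_\theta(X^k)=E\bigl((\theta X(1))^k\bigr)=\theta^k\mu_k$, so the equation $m(\theta)=m_k$ yields $\hat\theta=(m_k/\mu_k)^{1/k}$, matching the stated form (note $\mu_k>0$ since $X(1)>0$). Equivariance is then immediate: replacing each $X_i$ by $\theta X_i$ multiplies $m_k$ by $\theta^k$ and hence $\hat\theta$ by $\theta$. Under the coupling $\hat\theta_i=\theta_i W$ with $W=(\tfrac{1}{n\mu_k}\sum Z_i^k)^{1/k}\ge 0$, so $\theta_1<\theta_2$ forces $\hat\theta_1\le\hat\theta_2$ almost surely and thus $\hat\theta_1\leq_{\textup{st}}\hat\theta_2$. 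Alternatively this part follows from the order-preserving lemmas: $g(x)=x^k$ is increasing on $(0,\infty)$ and the scale family is stochastically increasing, so preservation of the usual stochastic order under the increasing maps $g$ and $m^{-1}$, together with Lemma~\ref{l2}~(a) applied to $\sum g(X_i)$, propagate the order to $\hat\theta$.

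For the second estimator the same scaling governs the sample variance: sending $X_i\mapsto\theta X_i$ sends $m_2\mapsto\theta^2 m_2$ and $m_1\mapsto\theta m_1$, whence $S^2=m_2-m_1^2\mapsto\theta^2 S^2$ and $\hat\theta=\sqrt{S^2}/\sigma\mapsto\theta\,\sqrt{S^2}/\sigma$. Thus under the coupling $\hat\theta_i=\theta_i V$ with $V=\sqrt{S^2(Z_1,\dots,Z_n)}/\sigma\ge 0$ independent of $\theta$, giving $\hat\theta_1\le\hat\theta_2$ almost surely and hence $\hat\theta_1\leq_{\textup{st}}\hat\theta_2$.

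The hard part is the variance-based estimator, since it is not of the canonical form $m^{-1}(\bar g)$ for a single monotone $g$: the statistic $S^2=m_2-m_1^2$ is assembled from two moments and is not monotone in $m_1$, so the subtraction can reverse the ordering of the summands and the order-preserving route used above breaks down. The equivariance argument is precisely what circumvents this, and the only points needing care are that $\sigma>0$ (so $\hat\theta$ is well defined) and that the factor $V$ is genuinely nonnegative and free of $\theta$, both of which the coupling supplies.
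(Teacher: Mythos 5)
Your proof is correct, and for the second estimator it takes a genuinely different route from the paper. The paper handles $\hat\theta=\sqrt{S^2}/\sigma$ by first establishing the dispersive ordering $X(\theta_1)\leq_{\textup{disp}}X(\theta_2)$, invoking Oja's result that the vector of spacings $\mathbf{U}=(U_1,\dots,U_n)$ is then stochastically increasing in $\theta$ (in the multivariate stochastic order), writing $S^2=\frac{1}{n^2}\sum_{i<j}(U_j+\cdots+U_{i+1})^2$ as an increasing function of $\mathbf{U}$, and appealing to preservation of multivariate stochastic order under monotone maps. You instead exploit scale equivariance directly: realising each sample as $(\theta Z_1,\dots,\theta Z_n)$ gives $\hat\theta\stackrel{d}{=}\theta V$ with $V\geq 0$ free of $\theta$, and the explicit monotone coupling yields $\hat\theta_1\leq_{\textup{st}}\hat\theta_2$ at once; the same one-line argument disposes of the first estimator, which the paper also treats via the ``obvious'' order-preservation route you mention as an alternative. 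Your argument is shorter, entirely elementary, and produces an explicit almost-sure comparison; the paper's argument is heavier but strictly more general, since the dispersive-ordering/spacings machinery applies to any dispersively ordered family and any statistic that is an increasing function of the spacings, not only to exactly scale-equivariant statistics in an exact scale family. Your diagnosis of why the naive route fails for $S^2$ (it is not of the form $m^{-1}(\bar g)$ and is not monotone in the sample) is accurate, and your attention to $\mu_k>0$ and $\sigma>0$ covers the only well-definedness issues.
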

\begin{proof}
The first part of theorem is obvious since the family $\mathcal{F}$
is stochastically increasing in $\theta$.
It is also easy to prove that if $\theta_1<\theta_2\in\Theta$, then
$X(\theta_1)\leq_{\textup{disp}}X(\theta_2)$
 and then the vector of spacings $\mathbf{U}=(U_{1},\dots,U_{n})$,
where $U_i=X_{i:n}-X_{i-1:n}$, $i=1,\dots,n$ (we put $X_{0:n}=0$),
is stochastically increasing in $\theta$ (see Oja \cite{oja}). 
Since
$$
S^2=\frac{1}{n^2}\sum_{1\leq i<j\leq n} (X_{(j)}-X_{(i)})^2 
=\frac{1}{n^2}\sum_{1\leq i<j\leq n} (U_{j}+U_{j-1}+\cdots+U_{i+1})^2 ,
$$
then the  sample variance is an increasing function of vector $\mathbf{U}$.
Thus the theorem follows from
the stochastic preserving  property of multivariate stochastic ordered vectors under monotone operations.
\end{proof}

\begin{exa}
Let  $\mathbf{X}=(X_1,\dots,X_n)$ be a sample from the
logistic distribution with density 
$$
f(x;\theta)=\frac{\exp(-(x-\theta))}{1+\exp(-(x-\theta))}, \ x\in\mathcal{R},
\ \theta\in\mathcal{R}.
$$
It is not difficult to see that density $f$ is logconcave and $E_{\theta}(X)=\theta$.
 By Theorem~\ref{t_5} the estimator  $\hat \theta =\bar X$ is increasing in $\theta$ with respect to 
 likelihood ratio order.

\end{exa}

\begin{exa}
Let  $\mathbf{X}=(X_1,\dots,X_n)$ be a sample from the
Weibull distribution with density 
$$
f(x;\theta)=(1/\theta) x^{1/\theta-1}\exp{(-x^{1/\theta})}, \ x>0, \ \theta>0.
$$
It is obvious that the family of these
 distributions is not stochastically ordered in the  
parameter $\theta$.
Let $T_i=-\log X_i$, $i=1,\dots,n$.
After easy calculation we have that $T_1=_{\textup{st}}\theta W_1$,
where $W_1$ is the Gumbel distribution with cumulative distribution
function $e^{-e^{-x}}$, $x\in\mathcal{R}$. Thus the distribution of
 $T_1$ belongs to 
the scale parameter family.
 It is known that $E(W_1)=\gamma$, where $\gamma$ 
is Euler constant and $Var(W_1)=\pi^2/6$, hence
$\hat\theta=\sqrt{6S^2_T}/\pi$ and $\hat\theta=\bar T/\gamma$
are moment estimators for~$\theta$, where $S^2_T=1/n\sum_{i=1}^n(T_i-\bar T)^2$,
but we can not apply here
Theorem \ref{t7} since the support of $W_1$ is not $\mathcal{R}_+$.
So, let us consider $Z_{i}=|\log X_i|$, $i=1,\dots,n$.
Then we have $Z_1=_{\textup{st}} \theta|W_1|$ and 
$\mu=E(Z_1)=\gamma-2Ei(-1)=1.01598$
approximately, where $Ei(x)=-\int_{-x}^{\infty}e^{-t}/tdt$.
 Also, after calculations we have
 $\sigma^2=Var(Z_1)=\pi^2/6+4(\gamma-Ei(-1))Ei(-1)=0.945889$
approximately. By Theorem \ref{t7} the estimator $\hat\theta=\sqrt{S^2_Z}/\sigma$
is stochastically increasing in $\theta$. The same is true  for the
estimator $\hat\theta=\bar T/\mu$.
\end{exa}


\end{document}